\theoremstyle{plain}
\newtheorem{Theorem}{Theorem}
\newtheorem{Lemma}[Theorem]{Lemma}
\newtheorem{Corollary}[Theorem]{Corollary}
\newtheorem{Proposition}[Theorem]{Proposition}
\theoremstyle{definition}
\newtheorem{Definition}[Theorem]{Definition}
\newtheorem{Remark}[Theorem]{Remark}
\newtheorem{Example}[Theorem]{Example}
\begin{document}
\title{Convergence conditions for $p$--adic continued fractions}
\author {Nadir Murru, Giuliano Romeo,  Giordano Santilli}

\begin{abstract}
Continued fractions have been introduced in the field of $p$--adic numbers $\mathbb Q_p$ by several authors. However, a standard definition is still missing since all the proposed algorithms are not able to replicate all the properties of continued fractions in $\mathbb R$. In particular, an analogue of the Lagrange's Theorem is not yet proved for any attempt of generalizing continued fractions in $\mathbb Q_p$.  Thus, it is worth to study the definition of new algorithms for $p$–adic continued fractions.
The main condition that a new method needs to fulfill is the convergence in $\mathbb Q_p$ of the continued fractions.
In this paper we study some convergence conditions for continued fractions in $\mathbb{Q}_p$. These results allow to define many new families of continued fractions whose convergence is guaranteed. Then we provide some new algorithms exploiting the new convergence condition and we prove that one of them terminates in a finite number of steps when the input is rational, as it happens for real continued fractions.

%The algorithm defined in \cite{BI} terminates in a finite number of steps when processing a rational number, as it happens in the real case. In \cite{BII} the same author defined a $2$-step algorithm, showing better properties in terms of periodicity. In this paper we study some convergence conditions in $\mathbb{Q}_p$ in order to extend furthermore the construction made in \cite{BII}. Moreover, we define a $3$-step algorithm and through some experimental results we conjecture that it produces a finite continued fraction for each rational number. Finally, we prove the sufficient conditions to generalize this construction to an $n$-step algorithm, for all $n\in\mathbb{N}$.
\end{abstract}

\maketitle
\section{Introduction}
In 1940, Mahler \cite{Mah} gave the first idea for introducing continued fractions in the field of $p$--adic numbers $\mathbb Q_p$. Starting from this, several authors studied the problem of defining an algorithm for expanding elements of $\mathbb Q_p$ in continued fractions. The most notable results were provided by Browkin \cite{BI}, Ruban \cite{RUB} and Schneider \cite{SCH} who defined different $p$--adic continued fractions algorithms with the aim of obtaining the same good properties that hold in the real case. However, all these algorithms fail in the attempt of characterizing quadratic irrationals by periodic continued fractions, as in the case of $\mathbb R$. The study of the periodicity of these algorithms have been deepened by several authors. Schneider's algorithm is not periodic for all quadratic irrationals, but there is an effective criterion to forecast when this happens (see \cite{VP, TIL, DEWII}).
Ooto \cite{OO} proved that an analogue of Lagrange's Theorem does not hold for Ruban's continued fractions and Capuano et al. \cite{CVZ} gave an effective condition to check the periodicity. Moreover, Ruban and Schneider algorithms provide finite or periodic expansion for rationals.
Browkin's algorithm is of particular interest since it always gives finite representations for rational numbers, but it is not known if an analogue of the Lagrange's Theorem holds. In \cite{BEI, BEII}, the authors proved some results about the periodicity of this algorithm and Capuano et al. \cite{CMT} gave some necessary and sufficient conditions for periodicity, but such conditions do not allow to prove that an analogue of Lagrange's Theorem does not hold. From experimental results, it seems very unlikely that Browkin's algorithm provides periodic expansion for any quadratic irrational. For this reason, in 2000, Browkin himself defined a new algorithm \cite{BII} and it has been proved in \cite{BCMI} that also this second algorithm produces a finite continued fraction for rational numbers. Browkin's second algorithm works better on quadratic irrationals, but also in this case they do not always present periodic expansions in continued fractions. The periodicity of this algorithm has been deepened in \cite{MRS}. Further studies on $p$--adic continued fractions can be found in \cite{DEA, LAO, WANI, WANII}. Thus, it is worth to study the definition of new algorithms for $p$--adic continued fractions. It is believed that some slight modification of Browkin's second algorithm \cite{BII} can give a periodic continued fraction for all quadratic irrationals in $\mathbb{Q}_p$, without losing the finite representation for the rationals.
With this purpose in mind, the first condition that a new method needs to fulfill is the convergence in $\mathbb Q_p$ of the continued fractions produced by the algorithm.

%However, for doing this, it is fundamental to study the conditions that ensure the convergence in $\mathbb Q_p$ of the continued fractions, provided by new algorithms.

In this paper, we give a sufficient condition on the partial quotients of a $p$--adic continued fractions in order to achieve the convergence in $\mathbb{Q}_p$. In particular, we study a condition that allows to extend the idea of Browkin in \cite{BII}, giving space to several possible new definitions of $p$--adic continued fractions. Exploiting this condition, we then propose a new $p$--adic continued fraction algorithm that is a natural generalization of the construction performed in \cite{BII} for the second algorithm of Browkin. Moreover, %in Section \ref{finsec}, 
we also prove that this new algorithm terminates in a finite number of steps on each $\alpha\in\mathbb{Q}$.

 \vspace{-0.07cm}
\section{Preliminaries}

Let us denote with $v_p(\cdot)$ and $|\cdot|_p$, respectively, the $p$--adic valuation and the $p$--adic absolute value over $\mathbb{Q}$, where $p$ is an odd prime. The Euclidean norm will be denoted as usual by $|\cdot|$.
We denote a continued fraction of a value $\alpha$ with the usual notation as  
\[\alpha = b_0 + \cfrac{1}{b_1 + \cfrac{1}{b_2 + \cfrac{1}{\ddots}}} = [b_0, b_1, b_2, \ldots].\] 
Moreover, we call $\frac{A_n}{B_n}$, for all $n\in\mathbb{N}$, the convergents of the continued fraction, that may be defined recursively by using the well-known formulas 
\[
\begin{cases}
A_0=b_0,\\
A_1=b_1b_0+1,\\
A_n=b_nA_{n-1}+A_{n-2} \text{ for } n \geq 2,
\end{cases}
\begin{cases}
B_0=1,\\
B_1=b_1,\\
B_n=b_nB_{n-1}+B_{n-2} \text{ for } n \geq 2.
\end{cases}
\]
The first important requirement when designing an algorithm for $p$--adic continued fractions is that all the expansions converge to a $p$--adic number, that is
\[\lim\limits_{n\rightarrow +\infty} \frac{A_n}{B_n}=\alpha\in\mathbb{Q}_p.\]
%\begin{Remark}\label{rema1}
%Let us observe that for all $m,n\in\mathbb{N}$, $m>n$,
%\[\left|\frac{A_{m}}{B_{m}}-\frac{A_n}{B_n}\right|_p=\left|\frac{A_{n+1}}{B_{n+1}}-\frac{A_n}{B_n}\right|_p=\left|\frac{(-1)^n}{B_{n}B_{n+1}}\right|_p=p^{v_p(B_nB_{n+1})},\]
%so that the continued fraction $[b_0,b_1,\ldots]$ converges in the complete space $\mathbb{Q}_p$ if and only if
%\[\lim\limits_{n\rightarrow +\infty} v_p(B_nB_{n+1})=-\infty.\]
%\end{Remark}
The first algorithm proposed by Browkin in \cite{BI} works as follows.
Starting from an input $\alpha_0\in\mathbb{Q}_p$ then the partial quotients of the $p$--adic continued fraction are evaluated by
\begin{equation}\label{Br1}
\begin{cases}
b_n=s(\alpha_n)\\
\alpha_{n+1}=\frac{1}{\alpha_n-b_n},
\end{cases} \quad n \geq0
\end{equation}
where $s:\mathbb{Q}_p\rightarrow \mathbb{Q}$ is defined by
\[s(\alpha)=\sum\limits_{n=-r}^{0} a_n p^n\in\mathbb{Q},\]
for a $p$--adic number $\alpha=\sum\limits_{n=-r}^{+\infty} a_np^n\in\mathbb{Q}_p$, with $r\in\mathbb{Z}$ and $a_n\in \{-\frac{p-1}{2},\ldots,\frac{p-1}{2}\}$. In this algorithm, the function $s$ plays the same role of the floor function in the classical algorithm of continued fractions in $\mathbb R$. Ruban's algorithm \cite{RUB} employs the same function $s$, with the only difference that the representatives are taken in $\{0,\ldots, p-1 \}$. More than 20 years later, Browkin defines another algorithm in \cite{BII}, where starting from $\alpha_0\in\mathbb{Q}_p$, the partial quotients $b_n$, for $n \geq 0$, are evaluated by
\begin{align} 
\begin{cases}\label{Br2}
b_n=s(\alpha_n) \ \ \ \ \ & \textup{if} \ n \ \textup{even}\\
b_n=t(\alpha_n) & \textup{if} \ n \ \textup{odd}\ \textup{and} \ v_p(\alpha_n-t(\alpha_n))= 0\\
b_n=t(\alpha_n)-sign(t(\alpha_n)) & \textup{if} \ n \ \textup{odd} \ \textup{and} \ v_p(\alpha_n-t(\alpha_n))\neq 0\\
\alpha_{n+1}=\frac{1}{\alpha_n-b_n},
\end{cases}
\end{align}
where $t:\mathbb{Q}_p\rightarrow \mathbb{Q}$ is another function defined for any $p$--adic value $\alpha=\sum\limits_{n=-r}^{+\infty} a_np^n$ as
\[ t(\alpha)=\sum\limits_{n=-r}^{-1}a_np^n, \]
with $r\in\mathbb{Z}$ and $a_n\in \{-\frac{p-1}{2},\ldots,\frac{p-1}{2}\}$. 
In the following we will refer to \eqref{Br1} and \eqref{Br2} respectively as \textit{Browkin I} and \textit{Browkin II}.

The convergence in $\mathbb{Q}_p$ of the continued fractions generated by \textit{Browkin I} is based on the following lemma.

\begin{Lemma}[\cite{BI}, Lemma 1]\label{ConvBr1}
Let an infinite sequence $b_0,b_1,\ldots\in \mathbb{Z}[\frac{1}{p}]$ such that $v_p(b_{n})<0$, for all $n \geq 1$. Then the continued fraction $[b_0,b_1,\ldots]$ is convergent to a $p$--adic number.
\end{Lemma}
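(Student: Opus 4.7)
The plan is to show that the sequence of convergents $(A_n/B_n)_{n\geq 0}$ is Cauchy in $\mathbb{Q}_p$, which then forces convergence since $\mathbb{Q}_p$ is complete. The starting point is the standard identity
\[
\frac{A_n}{B_n}-\frac{A_{n-1}}{B_{n-1}}=\frac{(-1)^{n-1}}{B_nB_{n-1}},
\]
which, being a purely formal consequence of the recurrences defining $A_n,B_n$, is valid over $\mathbb{Q}_p$ as well. Taking $p$-adic absolute values, the difference of consecutive convergents has $p$-adic norm $p^{\,v_p(B_n)+v_p(B_{n-1})}$, so everything reduces to proving that $v_p(B_n)+v_p(B_{n-1})\to -\infty$.

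The key technical step I would carry out is an induction establishing the exact formula
\[
v_p(B_n)=\sum_{i=1}^{n}v_p(b_i)\qquad\text{for all }n\geq 1.
\]
The base cases $v_p(B_1)=v_p(b_1)$ and $v_p(B_2)=v_p(b_1)+v_p(b_2)$ are immediate: for $B_2=b_2b_1+1$ we have $v_p(b_2b_1)=v_p(b_1)+v_p(b_2)<0<v_p(1)$, so the non-archimedean triangle inequality is an equality. For the inductive step, assume the formula for $B_{n-1}$ and $B_{n-2}$ and look at $B_n=b_nB_{n-1}+B_{n-2}$. Then
\[
v_p(b_nB_{n-1})-v_p(B_{n-2})=v_p(b_n)+v_p(b_{n-1})<0,
\]
so $v_p(b_nB_{n-1})<v_p(B_{n-2})$ and the ultrametric inequality becomes an equality, giving $v_p(B_n)=v_p(b_n)+v_p(B_{n-1})=\sum_{i=1}^{n}v_p(b_i)$.

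Once this formula is in place, the conclusion is essentially free: since $v_p(b_i)\leq -1$ for every $i\geq 1$, one gets $v_p(B_n)+v_p(B_{n-1})\leq -(2n-1)$, hence
\[
\left|\frac{A_n}{B_n}-\frac{A_{n-1}}{B_{n-1}}\right|_p=p^{\,v_p(B_n)+v_p(B_{n-1})}\longrightarrow 0
\]
as $n\to\infty$. In the non-archimedean setting, tail differences going to zero are equivalent to the Cauchy property, so $(A_n/B_n)$ converges in $\mathbb{Q}_p$.

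The only place that requires genuine care is the inductive step, where one must verify that the two terms $b_nB_{n-1}$ and $B_{n-2}$ have strictly different valuations, so that the ultrametric inequality is sharp. This is exactly where the hypothesis $v_p(b_n)<0$ for all $n\geq 1$ is used, and it is the step I would double-check; the rest of the argument is then just a straightforward assembly of the valuation estimate with the standard convergent-difference formula.
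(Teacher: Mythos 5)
Your proof is correct and follows essentially the same route as the paper (and Browkin's original argument that it cites): you prove the exact valuation formula $v_p(B_n)=\sum_{i=1}^{n}v_p(b_i)$ via the ultrametric inequality, so that $v_p(B_nB_{n-1})\to-\infty$, and then conclude with the telescoping difference of convergents and completeness of $\mathbb{Q}_p$, which is precisely the mechanism described in Remark \ref{rema2}. No gaps to report.
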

In fact, the partial quotients $b_n$ arising from \textit{Browkin I}, for $n\geq 1$, all have negative valuations.\\
 
For what concerns \textit{Browkin II}, the $p$--adic convergence relies on the following lemma.
\begin{Lemma}[\cite{BII}, Lemma 1]\label{ConvBr2}
Let an infinite sequence $b_0,b_1,\ldots\in \mathbb{Z}[\frac{1}{p}]$ such that, for all $n\in\mathbb{N}$,
\begin{equation}
\begin{cases}
v_p(b_{2n})=0\\
v_p(b_{2n+1})<0.
\end{cases}
\end{equation}
Then the continued fraction $[b_0,b_1,\ldots]$ is convergent to a $p$--adic number.
\end{Lemma}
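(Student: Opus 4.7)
The plan is to mimic the classical strategy: show that the convergents $A_n/B_n$ form a Cauchy sequence in $\mathbb{Q}_p$, using the fact that $\mathbb{Q}_p$ is complete. The starting point is the standard determinant identity
\[
A_n B_{n-1} - A_{n-1} B_n = (-1)^{n-1},
\]
which follows by a direct induction from the recurrences for $A_n, B_n$ and does not depend on the ground field. This gives
\[
\frac{A_n}{B_n} - \frac{A_{n-1}}{B_{n-1}} = \frac{(-1)^{n-1}}{B_n B_{n-1}},
\]
so convergence in $\mathbb{Q}_p$ reduces to showing that $v_p(B_n B_{n-1}) \to -\infty$, i.e.\ that $v_p(B_n) \to -\infty$.

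The core of the argument is then an explicit computation of $v_p(B_n)$ by induction on $n$, exploiting the parity hypothesis on the $b_n$ and the ultrametric inequality (with equality when the two valuations differ). I would claim, and prove by induction, that
\[
v_p(B_{2n}) \;=\; v_p(B_{2n-1}) \;=\; \sum_{k=1}^{n} v_p(b_{2k-1}),
\]
using the base cases $v_p(B_0)=0$ and $v_p(B_1)=v_p(b_1)<0$. For the inductive step from $2n-1,2n$ to $2n+1$, the relation $B_{2n+1}=b_{2n+1}B_{2n}+B_{2n-1}$ together with $v_p(b_{2n+1})<0$ forces $v_p(b_{2n+1}B_{2n})<v_p(B_{2n})=v_p(B_{2n-1})$, so the minimum is attained strictly on the first summand. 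For the step to $2n+2$, the relation $B_{2n+2}=b_{2n+2}B_{2n+1}+B_{2n}$ together with $v_p(b_{2n+2})=0$ gives $v_p(b_{2n+2}B_{2n+1})=v_p(B_{2n+1})<v_p(B_{2n})$, again forcing equality in the ultrametric triangle inequality.

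Since each $v_p(b_{2k-1})$ is strictly negative, the partial sums $\sum_{k=1}^n v_p(b_{2k-1})$ tend to $-\infty$, hence $|B_n|_p \to +\infty$ and $|1/(B_n B_{n-1})|_p \to 0$. Consequently, for any $m>n$, the ultrametric estimate
\[
\left|\frac{A_m}{B_m}-\frac{A_n}{B_n}\right|_p \;\leq\; \max_{n<k\leq m}\left|\frac{1}{B_k B_{k-1}}\right|_p
\]
shows that $\{A_n/B_n\}$ is a Cauchy sequence in $\mathbb{Q}_p$, which therefore converges.

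The only delicate point is the simultaneous tracking of $v_p(B_n)$ and $v_p(B_{n-1})$: the argument at step $n+1$ needs equality (not just an inequality) at step $n$, which is exactly what forces the induction to carry both parities together. Once this bookkeeping is set up correctly, the rest is routine. Note that no hypothesis on $b_0$ is used beyond $b_0\in\mathbb{Z}[1/p]$, as $b_0$ only enters as an additive constant in $A_n/B_n$.
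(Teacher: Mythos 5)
Your proof is correct and follows essentially the same route the paper indicates for this lemma (it is quoted from Browkin, with the mechanism sketched in Remark \ref{rema2}): a parity-wise induction giving the exact valuations of the $B_n$, hence the strict decrease and divergence of $v_p(B_nB_{n+1})$, followed by the ultrametric Cauchy argument. The only micro-point worth stating explicitly is that the divergence of $\sum_{k=1}^{n} v_p(b_{2k-1})$ uses that $p$--adic valuations are integers, so each summand is $\leq -1$.
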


\begin{Remark}\label{rema2}
The proofs of Lemma \ref{ConvBr1} and Lemma \ref{ConvBr2} exploit the strict decrease of the sequence of valuations $v_p(B_n{B_{n+1}})$. 
%For all $n\geq 1$,
%\[v_p(B_{n+1}B_{n})<v_p(B_{n}B_{n-1})\]
%if and only if
%\[v_p(B_{n+1})<v_p(B_{n-1}),\]
%using some basic properties of the $p$--adic valuation. Hence,
Moreover, requiring the sequence $v_p(B_{n}B_{n+1})$ strictly decreasing is equivalent to ask that $v_p(B_{n+1})<v_p(B_{n-1})$ for all $n\geq 1$. Thus, the divergence of the sequence of valuations implies the convergence of the correspondent $p$--adic continued fraction. Indeed, in this way we have that
\[ \lim_{n \to \infty} v_p\left( \frac{A_{n+1}}{B_{n+1}} - \frac{A_n}{B_n} \right) = \lim_{n \to \infty} - v_p(B_nB_{n+1}) = +\infty, \]
and 
\[\left|\frac{A_{m}}{B_{m}}-\frac{A_n}{B_n}\right|_p=\left|\frac{A_{n+1}}{B_{n+1}}-\frac{A_n}{B_n}\right|_p=\left|\frac{(-1)^n}{B_{n}B_{n+1}}\right|_p=p^{v_p(B_nB_{n+1})},\]
 proving that $\left\{ \frac{A_n}{B_n} \right\} _{n\in\mathbb{N}}$ is a Cauchy sequence and therefore convergent in $\mathbb Q_p$.
\end{Remark}

\section{Convergence of $p$--adic continued fractions}
The reduction of the number of partial quotients having negative valuations shows better properties in terms of the periods of quadratic irrationals, as pointed out in \cite{BII}. Therefore a promising approach for the definition of a new algorithm should be a further modification of \textit{Browkin II}: we may define a ``$3$-steps''-algorithm that generates the partial quotients such that, for all $n \in\mathbb{N}$, 
\begin{equation}\label{Br3}
\begin{cases}
v_p(b_{3n+1})<0\\
v_p(b_{3n+2})=0\\
v_p(b_{3n+3})=0.
\end{cases}
\end{equation}

Such a construction turns out to be more complex than the previous two algorithms defined by Browkin. In the following example we show that a sequence having these constraints does not converge without a stronger hypothesis. In particular, for every prime $p$, we may construct a suitable continued fraction that does not converge to any $p$--adic number.

\begin{Example}\label{controex}
Let $p$ be an odd prime. We are going to show that there exists a sequence $b_0,b_1,\ldots\in\mathbb{Q}_p$  with, for all $n\in\mathbb{N}$,
\[\begin{cases}
v_p(b_{3n+1})<0\\
v_p(b_{3n+2})=0\\
v_p(b_{3n+3})=0,
\end{cases}\]
such that the sequence $v_p(B_nB_{n+1})$ does not diverge to $-\infty$. Let us define $b_{1}=\frac{1}{p}$. The first denominators of the convergents are
\begin{align*}
B_0&=1,\\
B_1&=b_1=\frac{1}{p},\\
B_2&=b_2B_1+B_0=\frac{b_2+p}{p},\\
B_3&=b_3B_2+B_1=\frac{(b_3b_2+1)+b_3p}{p}.
\end{align*}
Their valuations are
\begin{align*}
v_p(B_0)&=v_p(1)=0,\\
v_p(B_1)&=v_p\Big(\frac{1}{p}\Big)=-1,\\
v_p(B_2)&=v_p\Big(\frac{b_2+p}{p}\Big)=-1,\\
v_p(B_3)&=v_p\Big(\frac{(b_3b_2+1)+b_3p}{p}\Big).
\end{align*}
Let us choose suitable $b_2$ and $b_3$ such that $b_3b_2+1=p$ (for example, $b_2=2$ and $b_3=\frac{p-1}{2}$). Then
\[v_p(B_3)=v_p\Big(\frac{b_3p+p}{p}\Big)=v_p(b_3+1)\geq 0.\] 
%Suppose now the latter is verified.
At this point,
%\[v_p(B_1)=-1, \ v_p(B_2)=-1, \ v_p(B_3)\geq 0.\]
%Let us suppose that, at a generic step 
for a generic $n\in\mathbb{N}$ for which
\[v_p(B_{3n+1})=-1, \ v_p(B_{3n+2})=-1, \ v_p(B_{3n+3})\geq 0,\]
we are going to show that there exists a choice for the partial quotients such that
\[v_p(B_{3(n+1)+1})=-1, \ v_p(B_{3(n+1)+2})=-1, \ v_p(B_{3(n+1)+3})\geq 0.\]
We can write
\begin{align*}
B_{3n+1}&=\frac{a_1}{p}, \ &\textup{with} \ v_p(a_1)&=0,\\
B_{3n+2}&=\frac{a_2}{p}, \ &\textup{with} \ v_p(a_2)&=0,\\
B_{3n+3}&=a_3, \ &\textup{with} \ v_p(a_3)&\geq 0.
\end{align*}
We have two cases: 
\begin{itemize}
    \item 
In the case that $v_p(a_3+a_2)=0$, we choose $b_{3n+4}=\frac{1}{p}$. Therefore,
\[B_{3n+4}=b_{3n+4}B_{3n+3}+B_{3n+2}=\frac{a_3+a_2}{p}.\]
Its valuation is
\[v_p(B_{3n+4})=v_p(a_3+a_2)-v_p(p)=-1,\]
so that we can write $B_{3n+4}=\frac{a_4}{p}$, with $v_p(a_4)=0$. Subsequently,
\[
B_{3n+5}=b_{3n+5}B_{3n+4}+B_{3n+3}=b_{3n+5}\frac{a_4}{p}+a_3=\frac{b_{3n+5}a_4+a_3p}{p},
\]
so that $v_p(B_{3n+5})=-1$. It means that $B_{3n+5}=\frac{a_5}{p}$, with $v_p(a_5)=0$. At the following step,
\[B_{3n+6}=b_{3n+6}B_{3n+5}+B_{3n+4}=\frac{b_{3n+6}a_5+a_4}{p}.\]
Notice that $a_4$ and $a_5$ are arbitrary nonzero elements and we can choose a suitable $b_{3n+6}$ such that
\[b_{3n+6}a_5+a_4\equiv 0 \bmod p.\]
We obtain that $p$ divides $b_{3n+6}a_5+a_4$ and so $v_p(B_{3n+6})\geq 0$.
In this case we have obtained that, starting from
\[v_p(B_{3n+1})=-1, \ v_p(B_{3n+2})=-1, \ v_p(B_{3n+3})\geq 0,\]
then
\[v_p(B_{3(n+1)+1})=-1, \ v_p(B_{3(n+1)+2})=-1, \ v_p(B_{3(n+1)+3})\geq 0.\]
\item
Let us examine also the case $v_p(a_3+a_2)>0$. Here we choose $b_{3n+4}=\frac{2}{p}$. Since $v_p(a_2)=0$ and $v_p(a_3+a_2)>0$, necessarily also $v_p(a_3)=0$. The next denominator is
\[B_{3n+4}=b_{3n+4}B_{3n+3}+B_{3n+2}=\frac{2a_3+a_2}{p}.\]
Notice that since $p$ divides $a_3+a_2$ but does not divide $a_3$, it can not divide $2a_3+a_2$. In this way $v_p(2a_3+a_2)=0$ and
\[v_p(B_{3n+4})=v_p(2a_3+a_2)-v_p(p)=-1.\]
Then we get
\[v_p(B_{3n+5})=v_p(b_{3n+5}B_{3n+4}+B_{3n+3})=-1,\]
and so we can write
\begin{align*}
B_{3n+4}&=\frac{a_4}{p}, \ &\textup{with} \ v_p(a_4)&=0,\\
B_{3n+5}&=\frac{a_5}{p}, \ &\textup{with} \ v_p(a_5)&=0.
\end{align*}
At the next step we have
\[B_{3n+6}=b_{3n+6}B_{3n+5}+B_{3n+4}=\frac{b_{3n+6}a_5+a_4}{p}.\]
As before, we choose $b_{3n+6}$ such
\[b_{3n+6}a_5+a_4\equiv 0 \bmod p.\]
In this way we get $v_p(B_{3n+6})\geq 0$. Hence, also in this second case we have obtained that
\[v_p(B_{3(n+1)+1})=-1, \ v_p(B_{3(n+1)+2})=-1, \ v_p(B_{3(n+1)+3})\geq 0.\]
\end{itemize}
We have just constructed a sequence of denominators $B_n$ such that the sequence of valuations $v_p(B_{n}B_{n+1})=v_p(B_{n})+v_p(B_{n+1})$ can not diverge to $-\infty$. In fact, in particular, $v_p(B_n)\geq -1$ for all $n\in\mathbb{N}$ and the $p$--adic continued fraction is not convergent.
\end{Example}

Starting from the observations of the last example, we would like to characterize the strict decrease of the sequence $v_p(B_nB_{n+1})$ in general. From Remark \ref{rema2}, it is sufficient to investigate the condition $v_p(B_{n+1})<v_p(B_{n-1})$ for all $n\geq 1$.\\

In the following, $b_0,b_1,\ldots$ are elements of $\mathbb{Q}_p$. In fact, as we are going to see in the next results, Browkin's hypotesis of $b_n\in\mathbb{Z}[\frac{1}{p}]$ for all $n\in\mathbb{N}$, seen in Lemma \ref{ConvBr1}  and Lemma \ref{ConvBr2}, is not needed.

\begin{Lemma}\label{lem1}
For all $n\geq 1$, if $v_p(B_{n+1})<v_p(B_{n-1})$, then
\[v_p(B_{n+1})\leq v_p(B_n).\]
\begin{proof}
Let us recall that
\[v_p(B_{n+1})=v_p(b_{n+1}B_n+B_{n-1})\geq \min \{v_p(b_{n+1}B_n),v_p(B_{n-1}) \},\]
with the equality for $v_p(b_{n+1}B_n)\neq v_p(B_{n-1})$.\\
If $v_p(b_{n+1}B_n)< v_p(B_{n-1})$, then
\[v_p(B_{n+1})=v_p(b_{n+1}B_n)=v_p(b_{n+1})+v_p(B_n)\leq v_p(B_n),\]
since $v_p(b_{n+1})\leq 0$. Instead, if $v_p(b_{n+1}B_n)\geq v_p(B_{n-1})$,
\[v_p(B_{n+1})\geq \min \{v_p(b_{n+1}B_n),v_p(B_{n-1})\}= v_p(B_{n-1}),\]
but it is a contradiction with the hypothesis of $v_p(B_{n+1})<v_p(B_{n-1})$, hence this second case can not occur.
\end{proof}
\end{Lemma}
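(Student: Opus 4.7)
My plan is to apply a direct case analysis to the fundamental recurrence $B_{n+1} = b_{n+1} B_n + B_{n-1}$ using the ultrametric property of $v_p$. Writing
\[ v_p(B_{n+1}) \;\geq\; \min\{v_p(b_{n+1}) + v_p(B_n),\; v_p(B_{n-1})\}, \]
which becomes an equality whenever the two arguments of the minimum are distinct, the natural split is according to which of the two terms is smaller.

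In the first case, suppose $v_p(b_{n+1}) + v_p(B_n) < v_p(B_{n-1})$. Then the minimum is strictly attained by the first term, so the ultrametric inequality forces the equality $v_p(B_{n+1}) = v_p(b_{n+1}) + v_p(B_n)$. Since the partial quotients in this setting satisfy $v_p(b_{n+1}) \leq 0$ (the standing hypothesis for the $p$-adic continued fractions under investigation, inherited from the digit-extraction functions $s$ and $t$), this immediately gives $v_p(B_{n+1}) \leq v_p(B_n)$, which is exactly what must be proved.

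In the complementary case, $v_p(b_{n+1}) + v_p(B_n) \geq v_p(B_{n-1})$, the ultrametric bound yields $v_p(B_{n+1}) \geq v_p(B_{n-1})$ outright, directly contradicting the hypothesis $v_p(B_{n+1}) < v_p(B_{n-1})$. Hence this case cannot occur and the lemma follows.

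I do not anticipate any genuine obstacle: the whole statement is a short consequence of the strong triangle inequality applied to the two-term recurrence. The only delicate point is invoking $v_p(b_{n+1}) \leq 0$, which is not part of the statement of the lemma itself but is the implicit running assumption on the partial quotients throughout the paper; without it, the first case would not yield the desired inequality and one would need an additional argument (or a separate hypothesis) to control $v_p(b_{n+1}) + v_p(B_n)$ in terms of $v_p(B_n)$.
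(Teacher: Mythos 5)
Your proof is correct and follows essentially the same route as the paper's: the same case split on whether $v_p(b_{n+1}B_n)$ is smaller than $v_p(B_{n-1})$, with the ultrametric equality giving $v_p(B_{n+1})=v_p(b_{n+1})+v_p(B_n)\leq v_p(B_n)$ in the first case and a contradiction with the hypothesis in the second. Your remark that $v_p(b_{n+1})\leq 0$ is an implicit standing assumption rather than part of the lemma's statement matches the paper, which likewise invokes it without restating it.
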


On the other hand it is also possible to prove the following equivalence.

\begin{Lemma}\label{lem2}
For all $n\geq 1$, $v_p(B_{n+1})<v_p(B_{n-1})$ if and only if 
\[v_p(b_{n+1}B_{n})<v_p(B_{n-1}).\]
\begin{proof}
If $v_p(B_{n+1})<v_p(B_{n-1})$ and $v_p(b_{n+1}B_{n})\geq v_p(B_{n-1})$, then
\[v_p(B_{n+1})\geq\min \{v_p(b_{n+1}B_{n}),v_p(B_{n-1})\}=v_p(B_{n-1}),\]
but this contradicts the hypothesis.\\
Conversely, if $v_p(b_{n+1}B_{n})<v_p(B_{n-1})$, then
\[v_p(B_{n+1})=v_p(b_{n+1}B_{n})<v_p(B_{n-1}),\]
and the claim is proved. 
\end{proof}
\end{Lemma}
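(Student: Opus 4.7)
The plan is to apply the non-Archimedean inequality $v_p(x+y)\geq \min\{v_p(x),v_p(y)\}$, together with the fact that equality holds whenever $v_p(x)\neq v_p(y)$, to the defining recurrence $B_{n+1}=b_{n+1}B_n+B_{n-1}$. Both implications then reduce to a one-line computation, since the hypothesis in each direction fixes which of the two summands has smaller valuation. Notice that, unlike Lemma \ref{lem1}, no sign or size condition on $v_p(b_{n+1})$ is needed here: the statement is purely a consequence of the ultrametric behaviour of $v_p$.

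For the forward implication, I would argue by contrapositive. Suppose $v_p(b_{n+1}B_n)\geq v_p(B_{n-1})$. Then applying the ultrametric inequality to $B_{n+1}=b_{n+1}B_n+B_{n-1}$ yields
\[v_p(B_{n+1})\geq \min\{v_p(b_{n+1}B_n),v_p(B_{n-1})\}=v_p(B_{n-1}),\]
which contradicts the hypothesis $v_p(B_{n+1})<v_p(B_{n-1})$.

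For the reverse implication, assume $v_p(b_{n+1}B_n)<v_p(B_{n-1})$. Then the two summands in the recurrence have different valuations, so the ultrametric inequality is actually an equality, giving $v_p(B_{n+1})=v_p(b_{n+1}B_n)$, and this value is $<v_p(B_{n-1})$ by assumption.

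There is no real obstacle here; the content is essentially the equality case of the non-Archimedean inequality, and the structure of the proof mirrors the argument already used inside Lemma \ref{lem1}.
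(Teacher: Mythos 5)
Your proof is correct and matches the paper's argument: both directions rest on the ultrametric inequality applied to $B_{n+1}=b_{n+1}B_n+B_{n-1}$, with the forward direction handled by contradiction/contrapositive and the reverse direction using the equality case when the two summands have distinct valuations. No differences worth noting.
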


Using the results obtained above, we may prove the following theorem on the characterization of the strict decrease of the sequence $v_p(B_nB_{n+1})$.

\begin{Theorem} \label{teoconve}
The following conditions are equivalent:
\begin{enumerate}
\item[i)]$v_p(b_{n+1}B_{n})<v_p(B_{n-1})$, for all $n\geq 1$, 
\item[ii)]$v_p(b_nb_{n+1})<0$, for all $n\geq 1$.
\end{enumerate}
\begin{proof}
$i)\Rightarrow ii)$\\
Let us suppose that $v_p(b_{n+1}B_{n})<v_p(B_{n-1})$ for all $n\geq 1$.
\\If $v_p(b_{n+1})<0$, then $v_p(b_{n+1}b_{n})=v_p(b_{n+1})+v_p(b_{n})<0$ and the claim is proved. Therefore, let us assume $v_p(b_{n+1})=0$ and we prove that $v_p(b_{n})<0$.
%From Lemma \ref{lem1} and Lemma \ref{lem2}, we have that $v_p(B_{n+1})\leq v_p(B_n)$ for all $n\geq 1$.
Since $v_p(b_{n+1})=0$ and
\[v_p(b_{n+1}B_{n})<v_p(B_{n-1}),\]
then $v_p(B_{n})<v_p(B_{n-1})$. The latter means that:
\[v_p(B_{n})=v_p(b_{n}B_{n-1}+B_{n-2})<v_p(B_{n-1}).\]
Moreover, $v_p(B_n)=v_p(b_{n}B_{n-1})$ because otherwise $v_p(B_n)\geq v_p(B_{n-2})$ and this leads to a contradiction, by Lemma \ref{lem2}. Hence, we have obtained that
\[v_p(B_n)=v_p(b_{n}B_{n-1})=v_p(b_{n}) +v_p(B_{n-1})<v_p(B_{n-1}),\]
where the last inequality implies $v_p(b_{n})<0$ and this concludes the proof.\\
$ii)\Rightarrow i)$\\
Conversely, let us suppose that $v_p(b_nb_{n+1})<0$ for all $n\geq 1$. We prove the claim by induction on $n$.\\
\textbf{Base step:}\\
By hypotesis, we have that $v_p(b_1b_2)<0$ and $v_p(b_2b_3)<0$. Hence, for $n=1$ and $n=2$, we have that:
\begin{align*}
v_p(b_2B_1)&=v_p(b_2b_1)<0=v(1)=v(B_0),\\
v_p(b_3B_2)&=v_p(b_3b_2b_1+b_3)=v_p(b_3b_2b_1)=v_p(b_3b_2)+v_p(b_1)<\\
&<v_p(b_1)=v_p(B_1).
\end{align*}
\textbf{Induction step:}\\
Let us suppose that the thesis is true until a step $n\geq 2$ and we show it for $n+1$. %Observe that, by inductive hypotesis and using Lemma \ref{lem1} and Lemma \ref{lem2}, both $v_p(B_{n+1})\leq v_p(B_n)$ and $v_p(B_{n})\leq v_p(B_{n-1})$ hold. 
From $v_p(b_{n+2}b_{n+1})<0$ we get that either $v_p(b_{n+2})<0$ or $v_p(b_{n+1})<0$ (or both).\\ \ \\
\textbf{Case $v_p(b_{n+2})<0$:}\\
In this case, using inductive hypothesis and Lemma \ref{lem1} we get that $v_p(B_{n+1})\leq v_p(B_n)$, hence:
\[v_p(b_{n+2}B_{n+1})=v_p(b_{n+2})+v_p(B_{n+1})<v_p(B_{n+1})\leq v_p(B_n).\]\ \\
\textbf{Case $v_p(b_{n+1})<0$:}\\
In this case we have
\[
b_{n+2}B_{n+1}=b_{n+2}\left(b_{n+1}B_{n} + B_{n-1} \right), 
\]
therefore 
\[
v_p \left(b_{n+2}B_{n+1} \right) \leq v_p\left(b_{n+1}B_{n} + B_{n-1} \right).
\]
The inductive hypothesis ensures that $v_p\left(b_{n+1}B_{n}  \right) < v_p(B_{n-1})$, so 
\[
v_p \left(b_{n+2}B_{n+1} \right) \leq v_p\left(b_{n+1}B_{n} \right) < v_p(B_n)
\]
and this concludes the proof.
%\[v_p(b_{n+2}B_{n+1})=v_p(b_{n+2}(b_{n+1}B_n+B_{n-1}))=v_p(b_{n+2}b_{n+1}B_n+b_{n+2}B_{n-1}).\]
%Here we use that $v_p(B_{n})\leq v_p(B_{n-1})$ and $v_p(b_{n+1})<0$, so that:
%\[v_p(b_{n+2})+v_p(b_{n+1})+v_p(B_n)<v_p(b_{n+2})+v_p(B_{n-1}),\]
%that  is,
%\[v_p(b_{n+2}B_{n+1})=v_p(b_{n+2}b_{n+1}B_n)=v_p(b_{n+2}b_{n+1})+v_p(B_n)<v_p(B_n),\]
%and this concludes the proof.
\end{proof}
\end{Theorem}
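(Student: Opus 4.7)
My plan is to prove both implications by combining the recurrence $B_{n+1}=b_{n+1}B_n+B_{n-1}$ with the strong triangle inequality, leveraging Lemmas \ref{lem1} and \ref{lem2} which already translate condition $i)$ into strict-decrease statements about $v_p(B_n)$. A preliminary observation I would establish first is that $i)$ at step $n$ forces $v_p(b_{n+1})\leq 0$: Lemma \ref{lem2} gives $v_p(B_{n+1})<v_p(B_{n-1})$, so by the ultrametric the minimum in $B_{n+1}=b_{n+1}B_n+B_{n-1}$ must come from $b_{n+1}B_n$, hence $v_p(B_{n+1})=v_p(b_{n+1})+v_p(B_n)$; comparing with $v_p(B_{n+1})\leq v_p(B_n)$ from Lemma \ref{lem1} yields $v_p(b_{n+1})\leq 0$.

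For $i)\Rightarrow ii)$, I would fix $n\geq 1$ and split on $v_p(b_{n+1})$. When $v_p(b_{n+1})<0$, the preliminary observation applied at step $n-1$ gives $v_p(b_n)\leq 0$, whence $v_p(b_nb_{n+1})<0$; the base case $n=1$ is immediate since $v_p(B_0)=0$ reduces $i)$ to $v_p(b_2)+v_p(b_1)<0$. When $v_p(b_{n+1})=0$, condition $i)$ at step $n$ collapses to $v_p(B_n)<v_p(B_{n-1})$; expanding $B_n=b_nB_{n-1}+B_{n-2}$ and using Lemma \ref{lem2} at step $n-1$ (which gives $v_p(B_n)<v_p(B_{n-2})$), the ultrametric forces the minimum to be realized by $b_nB_{n-1}$, so $v_p(b_n)+v_p(B_{n-1})<v_p(B_{n-1})$ and hence $v_p(b_n)<0$.

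For $ii)\Rightarrow i)$, I would proceed by induction on $n$. The base cases $n=1,2$ come from direct computation with the explicit formulas for $B_1,B_2$ and the hypotheses $v_p(b_1b_2),v_p(b_2b_3)<0$. In the induction step, from $v_p(b_{n+1}b_{n+2})<0$ at least one of $v_p(b_{n+2})$, $v_p(b_{n+1})$ is negative. If $v_p(b_{n+2})<0$, the inductive hypothesis combined with Lemma \ref{lem1} gives $v_p(B_{n+1})\leq v_p(B_n)$, so $v_p(b_{n+2}B_{n+1})<v_p(B_n)$ is immediate. If instead $v_p(b_{n+1})<0$, I would expand $b_{n+2}B_{n+1}=b_{n+2}(b_{n+1}B_n+B_{n-1})$, use $v_p(b_{n+2})\leq 0$ (from the preliminary observation at step $n+1$) to bound $v_p(b_{n+2}B_{n+1})\leq v_p(b_{n+1}B_n+B_{n-1})$, and close with the inductive hypothesis $v_p(b_{n+1}B_n)<v_p(B_{n-1})$.

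The main obstacle I anticipate is the $v_p(b_{n+1})=0$ subcase of $i)\Rightarrow ii)$: concluding $v_p(b_n)<0$ there requires invoking $i)$ at two consecutive indices $n$ and $n-1$ and running a careful ultrametric analysis on the recurrence for $B_n$ to locate the dominating term. The rest of the argument is principally case-chasing with the strong triangle inequality.
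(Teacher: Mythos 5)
Your route mirrors the paper's proof closely: the same split on $v_p(b_{n+1})$ in $i)\Rightarrow ii)$ (with the $v_p(b_{n+1})=0$ subcase handled exactly as in the paper, via Lemma \ref{lem2} at step $n-1$ and the ultrametric), and the same two-case induction in $ii)\Rightarrow i)$ resting on Lemmas \ref{lem1} and \ref{lem2}. The genuine gap is in the induction step of $ii)\Rightarrow i)$, case $v_p(b_{n+1})<0$: you justify $v_p(b_{n+2})\le 0$ by ``the preliminary observation at step $n+1$''. But the hypothesis of that observation at step $n+1$ is condition $i)$ at step $n+1$, namely $v_p(b_{n+2}B_{n+1})<v_p(B_n)$ --- precisely the inequality you are proving at that step; the inductive hypothesis only provides $i)$ up to step $n$, hence nothing about $b_{n+2}$. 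As written this case is circular. Note also that the preliminary observation cannot serve as an independent source of sign information: it is obtained by citing Lemma \ref{lem1}, whose own proof already uses $v_p(b_{n+1})\le 0$ (the paper assumes non-positive valuations of partial quotients tacitly), and taken literally over arbitrary $b_n\in\mathbb{Q}_p$ the observation is false --- e.g.\ the valuation pattern $v_p(b_n)=-3,2,-3,2,\dots$ satisfies $i)$ at every step while infinitely many $b_n$ have positive valuation. Your first use of the observation, in $i)\Rightarrow ii)$ at step $n-1$, is not circular and is harmless under the paper's tacit assumption, but the second use is a real logical hole.

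The hole is easy to fill, and in a way that needs no sign hypothesis on $b_{n+2}$ at all: the inductive hypothesis $v_p(b_{n+1}B_n)<v_p(B_{n-1})$ forces, by the ultrametric inequality, the equality $v_p(B_{n+1})=v_p(b_{n+1})+v_p(B_n)$, whence
\[v_p(b_{n+2}B_{n+1})=v_p(b_{n+2}b_{n+1})+v_p(B_n)<v_p(B_n)\]
directly from $ii)$; this settles both cases of the induction at once. Alternatively, state explicitly the standing assumption $v_p(b_m)\le 0$ for all $m\ge 1$ (which the paper uses silently, e.g.\ in the proof of Lemma \ref{lem1} and in its own unexplained inequality $v_p(b_{n+2}B_{n+1})\le v_p(b_{n+1}B_n+B_{n-1})$), after which $v_p(b_{n+2})\le 0$ is available without any appeal to the observation. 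With either repair, the rest of your argument is correct and essentially identical to the paper's.
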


We easily obtain the following corollary, fully characterizing the strict decrease of the sequence of denominators.

\begin{Corollary}
The sequence $\{v_p(B_nB_{n+1})\}_{n\in\mathbb{N}}$ is strictly decreasing if and only if $v_p(b_nb_{n+1})<0$ for all $n\in\mathbb{N}$.
\end{Corollary}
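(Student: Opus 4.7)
The plan is to chain together Remark \ref{rema2}, Lemma \ref{lem2}, and Theorem \ref{teoconve} to obtain the desired equivalence, since the corollary is essentially a synthesis of these three results.

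First I would unpack what strict decrease of $\{v_p(B_nB_{n+1})\}$ actually means. Writing out the difference between two consecutive terms gives
\[v_p(B_nB_{n+1}) - v_p(B_{n-1}B_n) = v_p(B_{n+1}) - v_p(B_{n-1}),\]
so strict decrease of the full sequence is equivalent to $v_p(B_{n+1}) < v_p(B_{n-1})$ for every $n\geq 1$. This is exactly the reformulation already noted in Remark \ref{rema2}.

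Next I would translate this condition on the denominators into a condition involving the partial quotient $b_{n+1}$. By Lemma \ref{lem2}, the inequality $v_p(B_{n+1}) < v_p(B_{n-1})$ holds if and only if $v_p(b_{n+1}B_n) < v_p(B_{n-1})$. Hence the strict decrease of $\{v_p(B_nB_{n+1})\}$ is equivalent to condition (i) of Theorem \ref{teoconve}, namely that $v_p(b_{n+1}B_n) < v_p(B_{n-1})$ for all $n\geq 1$.

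Finally, Theorem \ref{teoconve} states precisely that (i) is equivalent to (ii), i.e.\ $v_p(b_nb_{n+1}) < 0$ for all $n\geq 1$. Concatenating the three equivalences yields the corollary. There is no real obstacle: every nontrivial step has already been absorbed into the preceding lemmas and theorem, and the only thing to be mildly careful about is the index bookkeeping to confirm that the range $n\geq 1$ arising from Lemma \ref{lem2} and Theorem \ref{teoconve} matches the range produced by unpacking the strict-decrease condition.
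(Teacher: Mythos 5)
Your proof is correct and is essentially the argument the paper intends: the corollary is stated as an immediate consequence of Remark \ref{rema2} (strict decrease of $v_p(B_nB_{n+1})$ is equivalent to $v_p(B_{n+1})<v_p(B_{n-1})$ for $n\geq 1$), Lemma \ref{lem2}, and Theorem \ref{teoconve}, which is exactly the chain of equivalences you give. The only caveat, which you already flag, is the harmless index mismatch between the corollary's ``for all $n\in\mathbb{N}$'' and the theorem's $n\geq 1$ (the value $b_0$ never enters the recursion for the $B_n$).
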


In other words, we have proved that the definition of two consecutive partial quotients with zero valuation makes us lose the strict decrease of the valuation. Moreover, the sufficiency of this condition means that every possible definition in this range works. It would be interesting to study some algorithms that satisfy this hypothesis, different from \textit{Browkin I} and \textit{Browkin II}. For example, it is possible to define $2$ negative partial quotients each $3$ steps or partial quotients that are not in $\mathbb{Z}[\frac{1}{p}]$, as long as the condition, $v_p(b_nb_{n+1})<0$ for all $n\in\mathbb{N}$, is satisfied.

\section{Design of a new algorithm}
In Example \ref{controex} we have showed that an algorithm generating the partial quotients as in $(\ref{Br3})$ never assures the $p$--adic convergence of the continued fraction. Moreover, we have characterized the strict decrease of the sequence $v_p(B_nB_{n+1})$.

However, for the negative divergence of this sequence, we do not need it to be strictly decreasing. So we may wonder in which cases it diverges although it is not strictly decreasing.

What we are going to see here is that adding one additional constraint on the two partial quotients having null valuation it is possible to avoid the growth of the valuation of the denominators $B_n$. In this way we succeed to obtain the convergence of a $p$--adic continued fraction with only one partial quotient with negative valuation each three steps, as defined in $(\ref{Br3})$.

\begin{Theorem}\label{ConvBr3}
Let $b_0,b_1,\ldots \in \mathbb{Q}_p$ such that, for all $n\in\mathbb{N}$:
\[\begin{cases}
v_p(b_{3n+1})<0\\
v_p(b_{3n+2})=0\\
v_p(b_{3n+3})=0.
\end{cases}\]
If $v_p(b_{3n+3}b_{3n+2}+1)=0$ for all $n \in \mathbb{N}$, then, 
\[v_p(B_{3n-2})=v_p(B_{3n-1})=v_p(B_{3n})>v_p(B_{3n+1}).\]
\begin{proof}
Let us prove the claim by induction on $n$.\\
\textbf{Base step}:
\begin{align*}
v_p(B_0)&=v_p(1)=0,\\
v_p(B_{1})&=b_1<0,\\
v_p(B_2)&=v_p(b_2b_1+1)=v_p(b_2)+v_p(b_1)=v_p(b_1)=v_p(B_1),\\
v_p(B_3)&=v_p(b_3B_2+B_1)=v_p((b_3b_2+1)B_1+b_3B_0)\\
&=v_p((b_3b_2+1)B_1)=v_p(B_1)=v_p(B_2),\\
v_p(B_4)&=v_p(b_4B_3+B_2)=v_p(b_4)+v_p(B_3)<v_p(B_3)=\\
&=v_p(B_1)=v_p(B_2),
\end{align*}
where we employed that $v_p(b_4)<0$ and $v_p(b_3b_2+1)=0$.\\
\textbf{Induction step:}\\
Let us suppose that:
\[v_p(B_{3n-2})=v_p(B_{3n-1})=v_p(B_{3n})>v_p(B_{3n+1}).\]
In fact, the valuation of $B_{3n+1}$ is:
\[v_p(B_{3n+1})=v_p(b_{3n+1}B_{3n}+B_{3n-1})=v_p(b_{3n+1})+v_p(B_{3n}) < v_p(B_{3n}),\]
since, by induction hypotesis, $v_p(B_{3n})=v_p(B_{3n-1})$ and $v_p(b_{3n+1})<0$.\\ Recalling that $v_p(b_{3n+4})<0$ and $v_p(b_{3n+3}b_{3n+2}+1)=0$, at the following steps we obtain:
\begin{align*}
v_p(B_{3n+2})&=v_p(b_{3n+2}B_{3n+1}+B_{3n})=v_p(b_{3n+2})+v_p(B_{3n+1})=\\
&=v_p(B_{3n+1})<v_p(B_{3n}),\\
v_p(B_{3n+3})&=v_p(b_{3n+3}B_{3n+2}+B_{3n+1})=\\
&=v_p((b_{3n+3}b_{3n+2}+1)B_{3n+1}+b_{3n+3}B_{3n})=\\
&=v_p((b_{3n+3}b_{3n+2}+1)B_{3n+1})=v_p(B_{3n+1})=\\&=v_p(B_{3n+2})<v_p(B_{3n}),\\
v_p(B_{3n+4})&=v_p(b_{3n+4}B_{3n+3}+B_{3n+2})=v_p(b_{3n+4})+v_p(B_{3n+3})<\\
&<v_p(B_{3n+3})=v_p(B_{3n+1})=v_p(B_{3n+2}).
\end{align*}
Hence, we have obtained that
\[v_p(B_{3n+4})<v_p(B_{3n+3})=v_p(B_{3n+2})=v_p(B_{3n+1})<v_p(B_{3n}),\]
and this proves the claim.
\end{proof}
\end{Theorem}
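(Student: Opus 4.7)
The natural strategy is induction on $n$, since the claim cycles with period three and the recurrence $B_k = b_k B_{k-1} + B_{k-2}$ only reaches two steps back. For the base case I would compute $v_p(B_0),\ldots,v_p(B_4)$ directly. The first three valuations are immediate from the recurrence together with the hypotheses $v_p(b_1)<0$, $v_p(b_2)=0$: one gets $v_p(B_0)=0$, $v_p(B_1)=v_p(b_1)$, and $v_p(B_2)=v_p(b_2b_1+1)=v_p(b_1)$, since the product dominates the $1$. The only delicate step is $v_p(B_3)$, which I would handle by substituting the recurrence for $B_2$ into that for $B_3$ to obtain
\[
B_3 = b_3(b_2 B_1 + B_0) + B_1 = (b_3 b_2 + 1)B_1 + b_3 B_0,
\]
so that the hypothesis $v_p(b_3 b_2+1)=0$ combined with $v_p(B_1)<0=v_p(B_0)$ forces $v_p(B_3)=v_p(B_1)$. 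Then $v_p(B_4)=v_p(b_4)+v_p(B_3)<v_p(B_3)$ follows from $v_p(b_4)<0$.

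For the inductive step I would assume $v_p(B_{3n-2})=v_p(B_{3n-1})=v_p(B_{3n})>v_p(B_{3n+1})$ and prove the same statement shifted by one. The valuation of $B_{3n+2}=b_{3n+2}B_{3n+1}+B_{3n}$ is immediate from the recurrence together with the strict inequality $v_p(B_{3n+1})<v_p(B_{3n})$, giving $v_p(B_{3n+2})=v_p(B_{3n+1})$. The crucial step is $v_p(B_{3n+3})$: the same algebraic manipulation as in the base case, namely substituting the recurrence for $B_{3n+2}$ into that for $B_{3n+3}$, gives
\[
B_{3n+3} = (b_{3n+3}b_{3n+2}+1)B_{3n+1} + b_{3n+3}B_{3n},
\]
and here the hypothesis $v_p(b_{3n+3}b_{3n+2}+1)=0$ together with $v_p(b_{3n+3})=0$ and $v_p(B_{3n+1})<v_p(B_{3n})$ yields $v_p(B_{3n+3})=v_p(B_{3n+1})$. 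Finally, $v_p(b_{3n+4})<0$ gives $v_p(B_{3n+4})<v_p(B_{3n+3})$, closing the induction.

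The main point is not really a difficulty but rather identifying where the new hypothesis enters: it is exactly at the rewriting of $B_{3m+3}$ as a linear combination of $B_{3m+1}$ and $B_{3m}$ that the cross-term $b_{3m+3}b_{3m+2}+1$ appears, and the assumption $v_p(b_{3m+3}b_{3m+2}+1)=0$ is precisely what prevents the cancellation that, in Example \ref{controex}, allowed the valuations of the $B_n$ to remain bounded. Once one notices this identity, the rest of the proof is a bookkeeping exercise of tracking which of the two summands in $B_k=b_kB_{k-1}+B_{k-2}$ has strictly smaller valuation, using the inductive hypothesis at every step.
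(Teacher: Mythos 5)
Your proposal is correct and follows essentially the same route as the paper: induction on $n$, with the base case computed through $v_p(B_4)$ and the inductive step hinging on the rewriting $B_{3n+3}=(b_{3n+3}b_{3n+2}+1)B_{3n+1}+b_{3n+3}B_{3n}$, where the hypothesis $v_p(b_{3n+3}b_{3n+2}+1)=0$ forces $v_p(B_{3n+3})=v_p(B_{3n+1})$. The remaining bookkeeping of valuations matches the paper's argument step for step.
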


Theorem \ref{ConvBr3} easily leads to the following corollary, achieving the convergence of a $p$--adic continued fraction generating the partial quotients as in (\ref{Br3}).

\begin{Corollary}\label{CorConvBr3}
Let $b_0,b_1,\ldots$ as in Theorem \ref{ConvBr3}. Then the continued fraction $[b_0,b_1,\ldots]$ is convergent to a $p$--adic number.
\begin{proof}
We know from Remark \ref{rema2} that the continued fraction $[b_0,b_1,\ldots]$ converges to a $p$--adic number if and only if
\[\lim\limits_{n\rightarrow +\infty} v_p(B_nB_{n+1})= -\infty.\]
Notice that, for all $n\in\mathbb{N}$,
\[v_p(B_{3n}B_{3n+1})>v_p(B_{3n+1}B_{3n+2}), \]
since $v_p(B_{3n+1})<v_p(B_{3n})$ and $v_p(B_{3n+1})=v_p(B_{3n+2})$. Then
\[v_p(B_{3n+1}B_{3n+2})=v_p(B_{3n+2}B_{3n+3}), \]
since all the three valuations are equal. Moreover,
\[v_p(B_{3n+2}B_{3n+3})>v_p(B_{3n+3}B_{3n+4}), \]
since $v_p(B_{3n+4})<v_p(B_{3n+3})$ and $v_p(B_{3n+3})=v_p(B_{3n+2})$. So, the sequence $v_p(B_nB_{n+1})$ is decreasing and divergent. 
\end{proof}
\end{Corollary}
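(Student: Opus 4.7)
The plan is to reduce the corollary to Theorem \ref{ConvBr3} via the Cauchy criterion recorded in Remark \ref{rema2}: convergence of $[b_0,b_1,\ldots]$ in $\mathbb{Q}_p$ is equivalent to $\lim_{n\to+\infty} v_p(B_n B_{n+1}) = -\infty$, so the task is reduced to tracking the sequence of valuations of the denominators.

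First, I would unpack the content of Theorem \ref{ConvBr3}: the valuations $v_p(B_n)$ follow a plateau pattern, with three equal consecutive values $v_p(B_{3n-2}) = v_p(B_{3n-1}) = v_p(B_{3n})$ followed by a strict drop $v_p(B_{3n+1}) < v_p(B_{3n})$, for every $n \geq 1$. Because valuations take integer values and strict drops occur every three indices, this already implies $v_p(B_n)\to -\infty$.

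Next, I would analyse the transitions $v_p(B_n B_{n+1}) \mapsto v_p(B_{n+1}B_{n+2})$ according to the residue of $n$ modulo $3$. In two of the three cases (corresponding to $n \equiv 0$ and $n \equiv 2 \pmod 3$), the transition replaces a larger-valuation factor by a strictly smaller one and therefore strictly decreases the total valuation; in the remaining case ($n \equiv 1 \pmod 3$), the two endpoints lie inside the same plateau so the valuation is unchanged. This is precisely what the sequence of three displayed inequalities in the author's proof records: $v_p(B_{3n}B_{3n+1}) > v_p(B_{3n+1}B_{3n+2}) = v_p(B_{3n+2}B_{3n+3}) > v_p(B_{3n+3}B_{3n+4})$. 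Hence $\{v_p(B_n B_{n+1})\}_{n\in\mathbb{N}}$ is non-increasing and experiences at least two strict drops every three steps, so it diverges to $-\infty$, which by Remark \ref{rema2} proves convergence in $\mathbb{Q}_p$.

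I do not anticipate a real obstacle: once Theorem \ref{ConvBr3} is in hand, the whole argument is mechanical bookkeeping on its plateau structure. The only subtlety worth pointing out is the integrality of $p$-adic valuations, which converts the "infinitely many strict decreases by at least one" into genuine divergence to $-\infty$ rather than convergence to a finite limit.
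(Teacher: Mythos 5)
Your argument is correct and follows essentially the same route as the paper: reduce to the divergence of $v_p(B_nB_{n+1})$ via Remark \ref{rema2} and then read off, from the plateau structure of Theorem \ref{ConvBr3}, the relations $v_p(B_{3n}B_{3n+1})>v_p(B_{3n+1}B_{3n+2})=v_p(B_{3n+2}B_{3n+3})>v_p(B_{3n+3}B_{3n+4})$. Your extra remark that integrality of the valuation turns the infinitely many strict drops into genuine divergence to $-\infty$ is a small but welcome clarification that the paper leaves implicit.
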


\section{Some new algorithms}
Starting from Theorem \ref{ConvBr3} and Corollary \ref{CorConvBr3}, we propose some new algorithms. We use three different functions. For
\[a=\sum\limits_{n=-r}^{+\infty}a_np^n\in\mathbb{Q}_p, \ \ \ \ a_n\in\Big\{ 0, \pm 1,\pm 2,\ldots,\pm \frac{p-1}{2}\Big\},\]
the first two functions are the same $s$ and $t$ of \textit{Browkin II}, that are
\[
s(a)=\sum\limits_{n=-r}^{0}a_np^n, \ \ \ t(a)=\sum\limits_{n=-r}^{-1}a_np^n,\]
and then the third is:
\begin{align*}
u(a)=\begin{cases} +1 \ \  &\textup{if}  \ a_0\in\Big\{+2,\ldots,\dfrac{p-1}{2}\Big\}\cup \{-1\}\\
-1 &\textup{if}  \ a_0\in\Big\{-\dfrac{p-1}{2},\ldots,-2\Big\}\cup \{+1\}.
\end{cases}
\end{align*}

We can now design the shape of two new algorithms.

\begin{Definition}[First new algorithm]\label{firstnew}
On input $\alpha_0=\alpha$, for $n\geq 0$, our first new algorithm work as follows:
\begin{align*}\begin{cases}
b_n=s(\alpha_n) \ \ \ \ \ &\textup{if} \ n \equiv 0\bmod 3\\
b_n=t(\alpha_n) &\textup{if}  \ n \equiv 1 \bmod 3 \ \textup{and} \ v_p(\alpha_n-t(\alpha_n))= 0\\
b_n=t(\alpha_n)-sign(t(\alpha_n)) & \textup{if} \ n \equiv 1 \bmod 3 \ \textup{and} \ v_p(\alpha_n-t(\alpha_n))\neq0\\
b_n=u(\alpha_n)  & \textup{if} \ n \equiv 2 \bmod 3\\
\alpha_{n+1}=\frac{1}{\alpha_n-b_n}.
\end{cases}
\end{align*}
\end{Definition}

\begin{Definition}[Second new algorithm]\label{secondnew}
On input $\alpha_0=\alpha$, for $n\geq 0$, our second new algorithm work as follows:
\begin{align*}\begin{cases}
b_n=s(\alpha_n) \ \ \ \ \ &\textup{if} \ n \equiv 0\bmod 3\\
b_n=t(\alpha_n) &\textup{if}  \ n \equiv 1 \bmod 3 \ \textup{and} \ v_p(\alpha_n-t(\alpha_n))= 0\\
b_n=t(\alpha_n)-sign(t(\alpha_n)) & \textup{if} \ n \equiv 1 \bmod 3 \ \textup{and} \  v_p(\alpha_n-t(\alpha_n))\neq0\\
b_n=s(\alpha_n)-u(\alpha_n)  & \textup{if} \ n \equiv 2 \bmod 3\\
\alpha_{n+1}=\frac{1}{\alpha_n-b_n}.
\end{cases}
\end{align*}
\end{Definition}

\begin{Remark}
The choice of the third function  $u$ is a little tricky. The function $t$ takes all the negative powers, leaving out the constant term. The function $u$ needs to act on a $p$--adic number with zero valuation, but it has to leave apart another term with zero valuation, otherwise the third partial quotient will not have null valuation.

Clearly, the choice of this function can be done in several ways. In fact, there are a lot of manners to separate the constant term $a_0\in\{-\frac{p-1}{2},\ldots,\frac{p-1}{2} \}$ in two nonzero parts.
Here we have presented two proposals, but it would surely be interesting to analyze also other options different from ours.
\end{Remark}

Both of the constructions in Definition \ref{firstnew} and Definition \ref{secondnew} produce a sequence of partial quotients $b_0,b_1,\ldots\in\mathbb{Q}_p$ such that, for all $n\in\mathbb{N}$,
\[\begin{cases}
v_p(b_{3n+1})<0\\
v_p(b_{3n+2})=0\\
v_p(b_{3n})=0.
\end{cases}\]

We are going to see that also the additional condition required by Theorem \ref{ConvBr3}, i.e.
\[v_p(b_{3n+2}b_{3n+3}+1)=0, \ \textup{for} \ \textup{all} \ n \in \mathbb{N},\]
is satisfied for both algorithm.

\begin{Proposition}\label{Alg1}
Let $\alpha\in\mathbb{Q}_p$. Then the partial quotients generated by the new algorithms in Definition \ref{firstnew} and Definition \ref{secondnew} satisfy the conditions of Theorem \ref{ConvBr3}.
\begin{proof}
To prove the claim, we are left to show that     \[v_p(b_{3n+2}b_{3n+3}+1)=0, \, \text{ for all } n\in\mathbb{N}.\]
We prove it only for the second algorithm, the other proof is similar. First we notice that, by construction,
\[v_p(b_{3n+2}b_{3n+3})=v_p(b_{3n+2})+v(b_{3n+3})=0,\]
so that
$v_p(b_{3n+2}b_{3n}+1)\geq \min \{ v_p(b_{3n+2}b_{3n}), v_p(1)\}=0$.
Let us show that the case $v_p(b_{3n+2}b_{3n}+1)>0$ can not occur. %Thanks to the properties of the $p$--adic valuation, this could happen if and only if
%\[b_{3n+2}b_{3n+3}\equiv -1 \bmod p.\]
For all $n\in\mathbb{N}$,
\[\alpha_{3n+2}= \frac{1}{\alpha_{3n+1}-t(\alpha_{3n+1})}=a_0+a_1p+a_2p^2+\ldots. \]
and 
\begin{align*}
b_{3n+2}&=s(\alpha_{3n+2})-u(\alpha_{3n+2})=a_0 \mp 1,\\
b_{3n+3}&=s(\alpha_{3n+3})=s\Big(\frac{1}{\alpha_{3n+2}-b_{3n+2}}\Big)=(a_0-b_{3n+2})^{-1}=\pm 1.
\end{align*}
Therefore, the condition $v_p(b_{3n+2}b_{3n}+1)=0$ is satisfied if and only if
\[b_{3n+2}(a_0-b_{3n+2})^{-1} \equiv (\pm 1) (a_0 \mp 1 ) \equiv -1 \bmod p\]
is not fulfilled.
However, this would imply that $a_0 \equiv 0 \bmod p$, but this cannot happen, due to the constraints in the algorithm when using the function $t$.
\end{proof}
\end{Proposition}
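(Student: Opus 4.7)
The claim splits into two parts: the valuation pattern $v_p(b_{3n+1})<0$, $v_p(b_{3n+2})=0$, $v_p(b_{3n+3})=0$ required by Theorem \ref{ConvBr3}, and the extra condition $v_p(b_{3n+2}b_{3n+3}+1)=0$. The first part is essentially by construction, and I would dispose of it by an induction tracking $v_p(\alpha_n)$ through the three branches of the algorithm. After an $s$-step (at $n\equiv 0\bmod 3$) the remainder $\alpha_n-s(\alpha_n)$ has valuation $\geq 1$, forcing $v_p(\alpha_{n+1})<0$. After the $t$-step with sign-adjustment (at $n\equiv 1\bmod 3$) the two cases are set up precisely so that $v_p(\alpha_n-b_n)=0$, hence $v_p(\alpha_{n+1})=0$. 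After a $u$- or $(s-u)$-step (at $n\equiv 2\bmod 3$), the partition of digits built into the definition of $u$ ensures that $\alpha_n-b_n$ has leading digit different from $0$ modulo $p$, so $v_p(\alpha_{n+1})=0$ with nonzero residue; this in turn makes $b_{3(n+1)}=s(\alpha_{3(n+1)})$ have valuation $0$.

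The heart of the statement is the extra condition. My plan is to express $b_{3n+2}b_{3n+3}+1$ modulo $p$ in terms of the leading digit $a_0$ of $\alpha_{3n+2}=a_0+a_1p+a_2p^2+\cdots$, and show the result is a unit times $a_0$. For the second algorithm one has $b_{3n+2}=a_0\mp 1$ and $\alpha_{3n+3}=(\alpha_{3n+2}-b_{3n+2})^{-1}=(\pm 1+a_1p+\cdots)^{-1}$, whose $s$-truncation is $\pm 1$; a short multiplication then gives $b_{3n+2}b_{3n+3}+1\equiv\pm a_0\pmod p$. For the first algorithm the roles are swapped: $b_{3n+2}=\pm 1$ and $b_{3n+3}\equiv(a_0\mp 1)^{-1}\pmod p$, producing an analogous identity. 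In both cases the claim reduces to $a_0\not\equiv 0\pmod p$.

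The main obstacle, though a modest one, is to certify this last non-vanishing. It comes from the $t$-branch: we have $\alpha_{3n+2}=(\alpha_{3n+1}-b_{3n+1})^{-1}$, and the two-case definition of $b_{3n+1}$ is engineered precisely so that $v_p(\alpha_{3n+1}-b_{3n+1})=0$. In the first case this is the hypothesis of the case itself; in the second, it follows because $\alpha_{3n+1}-t(\alpha_{3n+1})$ has strictly positive valuation, so adding $\mp 1$ produces valuation exactly $0$. Inverting a unit in $\mathbb{Z}_p$ yields a unit with nonzero residue, giving $a_0\not\equiv 0\pmod p$. The rest of the argument is then a sign-by-sign residue computation; the only real bookkeeping is to handle uniformly the two branches in the definition of $u$ and to check the analogous identity for the first algorithm, which I expect to be entirely parallel to the computation for the second.
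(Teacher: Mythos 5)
Your proposal is correct and follows essentially the same route as the paper: reduce everything to showing $b_{3n+2}b_{3n+3}+1$ is, modulo $p$, a unit multiple of the leading digit $a_0$ of $\alpha_{3n+2}$, and then deduce $a_0\not\equiv 0\bmod p$ from the two-case design of the $t$-branch, which forces $v_p(\alpha_{3n+1}-b_{3n+1})=0$. The only difference is that you also spell out the valuation pattern and the first algorithm's computation, which the paper asserts by construction or leaves as "similar".
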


%In the following section 
Finally we prove that the second new algorithm succeed in obtaining the finiteness of the expansion for rational numbers, as it happens for \textit{Browkin I} and \textit{Browkin II}. We state it in the following theorem.

\begin{Theorem}\label{finito}
If $\alpha \in \mathbb Q$, then the second new algorithm (Definition \ref{secondnew}) stops in a finite number of steps.
\end{Theorem}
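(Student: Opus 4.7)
The plan is to adapt the standard finiteness proof for Browkin-type algorithms on $\mathbb{Q}$. Since $\alpha\in\mathbb{Q}$, an induction shows that every $\alpha_n$ produced by the algorithm is rational; I would write each $\alpha_n=A_n/B_n$ as a reduced fraction with $B_n>0$ and introduce the Euclidean height $H_n=\max(|A_n|,|B_n|)$. The goal is to show that $H_{3k+3}$ is strictly smaller than $H_{3k}$ along every three-step block (at least once $H_{3k}$ exceeds a threshold depending only on $p$), which forces termination since $H_n$ is a sequence of positive integers.

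First I would analyze, case by case, how the transformation $(A_n,B_n)\mapsto (A_{n+1},B_{n+1})$ acts at each residue class modulo $3$. For $n\equiv 0\pmod 3$ we have $v_p(\alpha_n)=0$ and $b_n=s(\alpha_n)\in\mathbb{Z}$ with $|b_n|\leq (p-1)/2$; writing $A_n-b_nB_n=p^{f_n}C_n$ with $p\nmid C_n$ and $f_n\geq 1$, a short coprimality check gives $A_{n+1}=B_n$ and $B_{n+1}=p^{f_n}C_n$ already in lowest terms, so a genuine factor $p^{f_n}$ is introduced into the denominator. For $n\equiv 1\pmod 3$, $b_n$ is a fraction with denominator exactly $p^{f_n}$ and bounded integer numerator, so the subtraction $\alpha_n-b_n$ cancels the $p^{f_n}$ in the denominator, producing a unit-valuation fraction whose numerator and denominator are controlled by $|A_n|$ and $|B_n|$ divided by $p^{f_n}$. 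For $n\equiv 2\pmod 3$, $b_n=s(\alpha_n)-u(\alpha_n)\in\mathbb{Z}$ with $|b_n|\leq (p+1)/2$ and $v_p(\alpha_n-b_n)=0$; here no $p$-cancellation happens, but the heights grow by only a bounded arithmetic factor.

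Next I would compose these three transformations across a block $3k\to 3k+3$ and extract an inequality of the form $H_{3k+3}\leq E_p\,H_{3k}/p^{f_{3k}}$, where $E_p$ depends only on $p$ and $f_{3k}\geq 1$ is the $p$-adic gain at step $3k$. Provided $E_p/p<1$, or more generally once $H_{3k}$ exceeds an explicit $p$-dependent constant, the height strictly decreases along the block. Since $H_n\in\mathbb{N}$, the sequence eventually enters a bounded regime; in this regime the pairs $(A_n,B_n)$ lie in a finite set, and the dynamics force either termination (when $\alpha_n=b_n$) or entry into a cycle. A cycle would produce an eventually periodic expansion of the rational $\alpha$, and a direct inspection of the three-step definition of the partial quotients rules this out (using, for instance, that the strict $p$-adic decrease of $v_p(B_nB_{n+1})$ established in Corollary \ref{CorConvBr3} precludes periodicity).

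The main obstacle will be the step $n\equiv 2\pmod 3$, which is $p$-adically unproductive: no $p$-factor is gained, and yet the partial quotient can be as large as $(p+1)/2$, so heights can nearly double there. One must therefore show that the single $p$-adic reduction obtained at step $n\equiv 0\pmod 3$ dominates the combined arithmetic expansion at the other two steps. For small primes and $f_{3k}=1$ this estimate is tight, and it may be necessary either to use a weighted height such as $|A_n|+\lambda_p|B_n|$ with a well-chosen $\lambda_p>0$, or to perform the contraction over two consecutive three-step blocks in order to gather a sufficiently large power of $p$; a subsidiary check is that the specific choice of $u$ in Definition \ref{secondnew}, tailored to satisfy the hypothesis of Theorem \ref{ConvBr3}, does not produce an arithmetic coincidence that spoils the contraction.
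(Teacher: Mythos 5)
Your overall plan (a height on the rational complete quotients that drops along each three-step block) is the same family of argument as the paper's, but as written it has two genuine gaps. First, the contraction is never actually established, and with only the bounds you allow yourself it is false in the worst case: at the two unit-valuation steps you only use $|b_n|\le\frac{p-1}{2}$ and $|b_n|\le\frac{p+1}{2}$, while the guaranteed $p$-adic gain per block is a single factor $p^{f}$ with $f\ge 1$, so your constant $E_p$ is of order $p^2/4$ against a gain of order $p$; no choice of (weighted) height or regrouping of blocks can repair this, because the imbalance is multiplicative and recurs in every block. What closes the estimate in the paper is a structural feature of the second algorithm that your case analysis misses: since $b_{3n+2}=s(\alpha_{3n+2})-u(\alpha_{3n+2})$, the next complete quotient has constant term $u(\alpha_{3n+2})^{-1}=\pm 1$, hence $b_{3n+3}=s(\alpha_{3n+3})=\pm 1$ (not merely $|b|\le\frac{p-1}{2}$); combined with $|b_{3n+2}|\le\frac{p-3}{2}$, the bound $|c_{3n+1}|\le p^{l}-1$ on the numerator of the $t$-generated partial quotient (from \cite{BCMI}), and the coprimality identities $|N_{3k+1}|=|D_{3k}|$, $|N_{3k+2}|=|D_{3k+1}|$, $|N_{3k+3}|=|D_{3k+2}|$ for the reduced complete quotients, the paper obtains the unconditional one-block strict decrease $|N_{3k+3}|+|D_{3k+3}|<|N_{3k}|+|D_{3k}|$ of the $\ell^1$-height, with no threshold, no bounded regime, and hence immediate termination.

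Second, your fallback for the bounded regime does not work as stated. If the complete quotients take finitely many values and a value repeats, the expansion of $\alpha$ is eventually periodic, and this is \emph{not} precluded by the decrease of $v_p(B_nB_{n+1})$ from Corollary \ref{CorConvBr3}: that decrease holds for \emph{every} sequence of partial quotients satisfying the hypotheses of Theorem \ref{ConvBr3}, periodic ones included (it is precisely what makes periodic expansions converge), and an eventually periodic continued fraction converges to a root of a quadratic equation, which may well be rational. Indeed, for Ruban's algorithm rational numbers genuinely do admit periodic expansions, so excluding cycles is exactly the content of the theorem and depends delicately on the choice of representatives and of $u$; it cannot be dispatched by ``direct inspection,'' and any honest cycle-exclusion argument would need essentially the same quantitative decrease you were unable to prove in the first part. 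The missing idea, in short, is the exact bookkeeping $b_{3n+3}=\pm 1$, $|c_{3n+1}|\le p^{l}-1$, $|N_{n+1}|=|D_n|$ that the paper uses to get a genuine one-block contraction.
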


%\section{Proof of Theorem \ref{finito}}\label{finsec}
\begin{proof}
Let us consider $\alpha\in\mathbb{Q}$. We are going to show that the algorithm from Definition \ref{secondnew} stops in a finite number of steps when the input is $\alpha$. By construction we have, 
\[v_p(\alpha_{3k+1})<0, \ v_p(\alpha_{3k+2})=v_p(\alpha_{3k+3})=0,\]
so that we can write
\begin{align*}
\alpha_{3k+1}&=\frac{N_{3k+1}}{D_{3k+1}p^l}, & \ &\text{with} \ (N_{3k+1},D_{3k+1})=1, \ \ p\not| N_{3k+1}D_{3k+1},  \ \ l\geq 1,\\
\alpha_{3k+2}&=\frac{N_{3k+2}}{D_{3k+2}}, & &\text{with} \ (N_{3k+2},D_{3k+2})=1, \ \ p\not| N_{3k+2}D_{3k+2},\\
\alpha_{3k+3}&=\frac{N_{3k+3}}{D_{3k+3}}, & &\text{with} \ (N_{3k+3},D_{3k+3})=1, \ \ p\not| N_{3k+3}D_{3k+3}.\\
\end{align*}
Let us notice that for this algorithm, for all $n\in\mathbb{N}$, the partial quotients are such that $b_{3n+2}\in\{-\frac{p-1}{2}+1,\ldots,-1,1,\ldots,\frac{p-1}{2}-1\}$ and $b_{3n+3}=\pm 1$, so that
\begin{equation*}
|b_{3n+2}|\leq \frac{p-3}{2},\quad
|b_{3n+3}|=1.
\end{equation*}
Since $v_p(b_{3n+1})<0$, we can write
\[b_{3n+1}=\frac{c_{3n+1}}{p^l}, \ \text{with} \ v_p(c_{3n+1})=0, \ l\geq 1.\]
The partial quotients $b_{3n+1}$ are generated by the function $t$ and it has been shown in \cite{BCMI} that
\[|c_{3n+1}|\leq p^l\left(1-\frac{1}{p^l}\right). \]
For the sake of simplicity, we also write $c_{3k+2}=b_{3k+2}$ and $c_{3k+3}=b_{3k+3}$, so that the coefficients $c_n$ always have zero valuation.\\
Exploiting $\alpha_{k+1}=\frac{1}{\alpha_k-b_k}$, we get
\begin{align*}
N_{3k+1}(N_{3k}-c_{3k}D_{3k})&=p^lD_{3k}D_{3k+1},\\
    N_{3k+2}(N_{3k+1}-c_{3k+1}D_{3k+1})&=p^lD_{3k+1}D_{3k+2},\\
N_{3k+3}(N_{3k+2}-c_{3k+2}D_{3k+2})&=D_{3k+2}D_{3k+3}.
\end{align*}
Since $(|N_n|,p|D_n|)=1$ for all $n\in\mathbb{N}$, then 
\[|N_{3k+1}|=|D_{3k}|, \ |N_{3k+2}|=|D_{3k+1}|,\ |N_{3k+3}|=|D_{3k+2}|,\]
and
\begin{align*}
|D_{3k+1}|&=\frac{|N_{3k}-c_{3k}D_{3k}|}{p^l}\leq \frac{|N_{3k}|+|c_{3k}D_{3k}|}{p^l} =\frac{1}{p^l}|N_{3k}|+\frac{1}{p^l}|D_{3k}|,\\
|D_{3k+2}|&=\frac{|N_{3k+1}-c_{3k+1}D_{3k+1}|}{p^l}\leq \frac{1}{p^l}|N_{3k+1}|+\left( 1- \frac{1}{p^l}\right)|D_{3k+1}|,\\
|D_{3k+3}|&=|N_{3k+2}-c_{3k+2}D_{3k+2}|\leq |N_{3k+2}|+\left(\frac{p-3}{2} \right)|D_{3k+2}|.
\end{align*}
By using the formulas above we may write
 
\begin{align*}
&|N_{3k+3}|+|D_{3k+3}|\leq |D_{3k+1}|+\frac{p-1}{2}|D_{3k+2}|\leq \\
&\leq  |D_{3k+1}|+\frac{p-1}{2}\left(\frac{1}{p^l}|N_{3k+1}|+ \frac{p^l-1}{p^l}|D_{3k+1}| \right)=\\
&=\frac{p-1}{2p^l}|N_{3k+1}|+\frac{p^{l+1}+p^l-p+1}{2p^l}|D_{3k+1}|\leq\\
&\leq \frac{p-1}{2p^l}|D_{3k}|+\frac{p^{l+1}+p^l-p+1}{2p^l} \cdot \left(\frac{1}{p^l}|N_{3k}|+\frac{1}{p^l}|D_{3k}|\right)=\\
&=\left(\frac{p^{l+1}+p^l-p+1}{2p^{2l}}\right)|N_{3k}|+\left(\frac{2p^{l+1}-p+1}{2p^{2l}}\right)|D_{3k}|.
\end{align*}

We have that $2p^{l+1}-p+1<2p^{2l}$, since $p^{2l} \geq p^l$ for every $l \geq 1$ and consequently we also have $p^{l+1}+p^l-p+1<2p^{2l}$.
Thus, we obtain, for all $k\in\mathbb{N}$, that
\[|N_{3k+3}|+|D_{3k+3}|<|N_{3k}|+|D_{3k}|.\]
Since the sequence $\{|N_{3n}|+|D_{3n}|\}_{n\in\mathbb{N}}$ is a strictly decreasing sequence of natural numbers it must be finite and hence $\alpha$ has a finite continued fraction.
\end{proof}
\section{Generalization to $n$ steps}
The aim of this section is to generalize Theorem \ref{ConvBr3} to a generic n-step algorithm. On this purpose, we also need several additional conditions on the valuations, thus we introduce the following notation for a family of sequences.
%We introduce the following definition.
%\begin{Definition}
Let $n,m\in\mathbb{N}$, with $m\geq 2$, we define the family of sequences $U_m^{(n)}$  as
\[U_m^{(0)}=1, \ \ U_m^{(1)}=b_m, \ \  U_m^{(n+1)}=b_{m+n}U_m^{(n)}+U_m^{(n-1)}.  \]
%\end{Definition}

\begin{Lemma}\label{seqden}
For every $n\geq 2$, the partial denominators $B_n$ can be obtained as:
\[B_n=U_2^{(n-1)}B_1+U_3^{(n-2)}B_0.\]
\begin{proof}
Let us prove the claim by induction on $n$. For $n=2$ and $n=3$ it holds since:
\begin{align*}
B_2&=b_2B_1+B_0=U_2^{(1)}B_1+U_3^{(0)}B_0,\\
B_3&=b_3B_2+B_1=(b_3b_2+1)B_1+b_3B_0=U_2^{(2)}B_1+U_3^{(1)}B_0.
\end{align*}
Now let us suppose that the claim holds at the steps $n$ and $n+1$, that is:
\begin{align*}
B_n&=U_2^{(n-1)}B_1+U_3^{(n-2)}B_0,\\
B_{n+1}&=U_2^{(n)}B_1+U_3^{(n-1)}B_0.
\end{align*}
We are going to show that it is true also for $B_{n+2}$. In fact:
\begin{align*}
B_{n+2}&=b_{n+2}B_{n+1}+B_{n}=\\
&=b_{n+2}(U_2^{(n)}B_1+U_3^{(n-1)}B_0)+(U_2^{(n-1)}B_1+U_3^{(n-2)}B_0)=\\
&=(b_{n+2}U_2^{(n)}+U_2^{(n-1)})B_1+(b_{n+2}U_3^{(n-1)}+U_3^{(n-2)})B_0=\\
&=U_2^{(n+1)}B_1+U_3^{(n)}B_0.
\end{align*}
It follows that the thesis is true for all $n\geq 2$.
\end{proof}
\end{Lemma}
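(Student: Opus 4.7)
The plan is to prove the identity by strong induction on $n \geq 2$, exploiting the fact that $B_n$ satisfies the second-order linear recurrence $B_n = b_n B_{n-1} + B_{n-2}$. Since the formula expresses $B_n$ as a linear combination of $B_0$ and $B_1$ with coefficients $U_2^{(n-1)}$ and $U_3^{(n-2)}$, the strategy is to show that these coefficient sequences each satisfy (when shifted appropriately) the very same recurrence that defines $B_n$, with initial conditions that match the $n=2,3$ cases.

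First I would dispose of the two base cases. For $n=2$ we have $B_2 = b_2 B_1 + B_0$; comparing with $U_2^{(1)} = b_2$ and $U_3^{(0)} = 1$ from the definition, the formula holds. For $n=3$, expanding $B_3 = b_3 B_2 + B_1 = (b_3 b_2 + 1) B_1 + b_3 B_0$, and noting that $U_2^{(2)} = b_3 U_2^{(1)} + U_2^{(0)} = b_3 b_2 + 1$ and $U_3^{(1)} = b_3$, the base case is verified.

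For the inductive step, assume the formula holds at indices $n$ and $n+1$:
\[
B_n = U_2^{(n-1)} B_1 + U_3^{(n-2)} B_0, \qquad B_{n+1} = U_2^{(n)} B_1 + U_3^{(n-1)} B_0.
\]
Applying $B_{n+2} = b_{n+2} B_{n+1} + B_n$ and regrouping gives
\[
B_{n+2} = \bigl(b_{n+2} U_2^{(n)} + U_2^{(n-1)}\bigr) B_1 + \bigl(b_{n+2} U_3^{(n-1)} + U_3^{(n-2)}\bigr) B_0.
\]
The point is then to recognize each parenthesized expression via the recurrence $U_m^{(k+1)} = b_{m+k} U_m^{(k)} + U_m^{(k-1)}$: with $m=2$, $k=n$ the subscript $m+k=n+2$ matches $b_{n+2}$ so the first coefficient is $U_2^{(n+1)}$; with $m=3$, $k=n-1$ we again get $m+k=n+2$, identifying the second coefficient as $U_3^{(n)}$. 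This yields the formula at $n+2$ and closes the induction.

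The main (and essentially only) obstacle is the index bookkeeping: one has to verify carefully that both shifts $U_2^{(n-1)}$ and $U_3^{(n-2)}$ are aligned so that the $b_{n+2}$ appearing in the $B$-recurrence coincides with the $b_{m+k}$ prescribed by the $U_m$-recurrence in each case. Once this alignment is made explicit, no further computation is needed.
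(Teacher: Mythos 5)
Your proof is correct and follows the same route as the paper: induction with the two base cases $n=2,3$, then the recurrence $B_{n+2}=b_{n+2}B_{n+1}+B_n$ combined with the $U_m$-recurrence, where the key index check ($m+k=n+2$ both for $m=2,k=n$ and $m=3,k=n-1$) is exactly the computation the paper performs. No gaps.
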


\begin{Remark}\label{seqdenk}
Notice that Lemma \ref{seqden} holds also starting from a generic step $k$. It means that for all $k\in\mathbb{N}$ and $n\geq 2$,
\[B_{k+n}=U_{k+2}^{(n-1)}B_{k+1}+U_{k+3}^{(n-2)}B_k,\]
and the proof is similar to the case $k=0$ seen in Lemma \ref{seqden}.
\end{Remark}

\begin{Theorem}\label{ConvBrN}
Let us consider $r\in\mathbb{N^+}$ and $b_0,b_1,\ldots \in \mathbb{Q}_p$ such that, for all $n\in\mathbb{N}$:
\[\begin{cases}
v_p(b_{rn+1})<0\\
v_p(b_{rn+i})=0, \ \forall i\in\{2,\ldots,r\}.\\
\end{cases}.\]
Moreover let us suppose that, for all $n\in\mathbb{N}$,
\begin{align*}
v_p(U_{rn+2}^{(i)})&=0 \ \textup{for} \ \textup{all} \  i\in \{2,\ldots,r-1\} \text{ and for } r \geq 3,\\
v_p(U_{rn+3}^{(i)})&=0 \ \textup{for} \ \textup{all}\  i\in \{2,\ldots,r-2\} \text { and for } r \geq 4.
\end{align*}

Then we have, for all $n\in\mathbb{N}$,
\[v_p(B_{rn+1})=v_p(B_{rn+2})=\ldots=v_p(B_{rn+r})>v_p(B_{rn+r+1}).\]
\begin{proof}
Let us prove the claim by induction on $n$.\\
\textbf{Base step:}\\
We prove the thesis for $n=0$. The valuation of the first denominator is:
\[v_p(B_1)=v_p(b_1)<0.\]
By Lemma \ref{seqden}, for $i\in\{2,\ldots,r\}$,
\begin{align*}
v_p(B_i)&=v_p(U_2^{(i-1)}B_1+U_3^{(i-2)}B_0)=v_p(U_2^{(i-1)}B_1)=v_p(b_2B_1)=v_p(B_1).
\end{align*}
At the following step, since $v_p(b_{r+1})<0$, we get:
\[v_p(B_{r+1})=v_p(b_{r+1}B_r+B_{r-1})=v_p(b_{r+1})+v_p(B_r)<v_p(B_r).\]
Hence, the claim is true for $n=0$.\\
\textbf{Induction step:}\\
Let us suppose that the thesis holds for a generic $n\in\mathbb{N}$, that is:
\[v_p(B_{rn+1})=v_p(B_{rn+2})=\ldots=v_p(B_{rn+r})>v_p(B_{rn+r+1}).\]
We want to prove the claim for $n+1$.
\begin{comment}
Since $v_p(b_{r(n+1)+1})<0$ by construction, the first denominator is:
\begin{align*}
v_p(B_{r(n+1)+1})&=v_p(b_{r(n+1)+1}B_{r(n+1)}+B_{r(n+1)-1})=\\
&=v_p(b_{r(n+1)+1})+v_p(B_{r(n+1)})<v_p(B_{r(n+1)}).
\end{align*}
\end{comment}
Here we use Remark \ref{seqdenk} with $k=r(n+1)$.
Now, for $i\in\{2,\ldots,r\}$,
\begin{align*}
v_p(B_{r(n+1)+i})&=v_p(U_{r(n+1)+2}^{(i-1)}B_{r(n+1)+1}+U_{r(n+1)+3}^{(i-2)}B_{r(n+1)})=\\
&=v_p(U_{r(n+1)+2}^{(i-1)}B_{r(n+1)+1})=\\
&=v_p(U_{r(n+1)+2}^{(i-1)})+v_p(B_{r(n+1)+1})=v_p(B_{r(n+1)+1}).
\end{align*}
At the following step, since $v_p(b_{r(n+2)+1})<0$, then:
\begin{align*}
v_p(B_{r(n+2)+1})&=v_p(b_{r(n+2)+1}B_{r(n+2)}+B_{r(n+2)-1})=\\
&=v_p(b_{r(n+2)+1}B_{r(n+2)})<v_p(B_{r(n+2)}).
\end{align*}
The induction is then complete and the claim holds for all $n\in\mathbb{N}$.
\end{proof}
\end{Theorem}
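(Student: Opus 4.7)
The plan is to mimic the structure of the proof of Theorem \ref{ConvBr3}, but with Lemma \ref{seqden} (together with its shifted version in Remark \ref{seqdenk}) doing the heavy lifting in place of the direct three-term recurrences that appeared in the $r=3$ case. I will proceed by induction on $n$, using the hypotheses on $v_p(U_{rn+2}^{(i)})$ and $v_p(U_{rn+3}^{(i)})$ to control all the intermediate denominators $B_{rn+i}$ for $2\leq i\leq r$ at once.

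For the base step $n=0$, I first record $v_p(B_0)=0$ and $v_p(B_1)=v_p(b_1)<0$. Then, for each $i\in\{2,\ldots,r\}$, Lemma \ref{seqden} gives
\[
B_i = U_2^{(i-1)}B_1 + U_3^{(i-2)}B_0.
\]
The hypotheses on the valuations of $U_2^{(\cdot)}$ and $U_3^{(\cdot)}$, together with $U_2^{(1)}=b_2$, $U_3^{(0)}=1$ and $U_3^{(1)}=b_3$, show that $v_p(U_2^{(i-1)})=0$ and $v_p(U_3^{(i-2)})=0$ throughout this range. Hence $v_p(U_2^{(i-1)}B_1)=v_p(B_1)<0=v_p(U_3^{(i-2)}B_0)$, and the strict inequality of the two valuations forces $v_p(B_i)=v_p(B_1)$ by the ultrametric property. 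Finally, since $v_p(b_{r+1})<0$, the recurrence yields $v_p(b_{r+1}B_r)<v_p(B_r)=v_p(B_{r-1})$, so $v_p(B_{r+1})=v_p(b_{r+1})+v_p(B_r)<v_p(B_r)$, which closes the base step.

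For the inductive step, I assume the conclusion at step $n$ and apply Remark \ref{seqdenk} with $k=r(n+1)$, obtaining
\[
B_{r(n+1)+i} = U_{r(n+1)+2}^{(i-1)}B_{r(n+1)+1} + U_{r(n+1)+3}^{(i-2)}B_{r(n+1)}
\]
for $i\in\{2,\ldots,r\}$. The inductive hypothesis gives $v_p(B_{r(n+1)+1})<v_p(B_{r(n+1)})$, and the hypotheses on $v_p(U_{r(n+1)+2}^{(i-1)})$ and $v_p(U_{r(n+1)+3}^{(i-2)})$ (verified exactly as above, taking into account the boundary cases $U^{(0)}=1$ and $U^{(1)}=b$) ensure that the first summand dominates in the ultrametric sense. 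Therefore $v_p(B_{r(n+1)+i})=v_p(B_{r(n+1)+1})$ for all $i\in\{2,\ldots,r\}$. The strict decrease at the next block boundary then follows from $v_p(b_{r(n+2)+1})<0$ applied to the two-term recurrence, exactly as in the base case.

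I expect no genuine obstacle; the argument is formally parallel to the proof of Theorem \ref{ConvBr3}, and the technical content has been isolated in Lemma \ref{seqden} and Remark \ref{seqdenk}. The main care needed is bookkeeping: checking that the index ranges $i-1\in\{1,\ldots,r-1\}$ and $i-2\in\{0,\ldots,r-2\}$ are all covered either by the explicit hypotheses of the theorem or by the trivial cases $U_{\cdot}^{(0)}=1$ and $U_{\cdot}^{(1)}=b_{\cdot}$ (whose valuations are already known from the assumptions on the $b_n$). Once this is set up correctly, every invocation of the ultrametric inequality is strict, so all the equalities and the final strict inequality follow without further work.
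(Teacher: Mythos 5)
Your proof is correct and follows essentially the same route as the paper: induction on $n$, using Lemma \ref{seqden} and Remark \ref{seqdenk} to write $B_{rn+i}$ in terms of $B_{rn+1}$ and $B_{rn}$, the hypotheses on $v_p(U_{\cdot}^{(\cdot)})$ (plus the trivial cases $U^{(0)}=1$, $U^{(1)}=b_{\cdot}$) to see the first summand strictly dominates ultrametrically, and $v_p(b_{r(n+1)+1})<0$ for the strict drop at each block boundary. Your explicit bookkeeping of the boundary indices $i-1=1$ and $i-2\in\{0,1\}$ is the only point where you are slightly more detailed than the paper, which absorbs these cases silently.
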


\begin{Corollary}\label{nsteps}
Let $r\in\mathbb{N^+}$ and $b_0,b_1,\ldots$ as in Theorem \ref{ConvBrN}. Then the continued fraction $[b_0,b_1,\ldots]$ is convergent to a $p$--adic number.
\begin{proof}
Using Remark \ref{rema2}, the continued fraction $[b_0,b_1,\ldots]$ converges in $\mathbb{Q}_p$ if and only if
\[\lim\limits_{n\rightarrow +\infty} v_p(B_nB_{n+1})= -\infty.\]
By Theorem \ref{ConvBrN} we have that, for all $n\in\mathbb{N}$,
\[v_p(B_{rn+1}B_{rn+2})=\ldots=v_p(B_{rn+r-1}B_{rn+r})>v_p(B_{rn+r}B_{rn+r+1}),\]
so that the sequence $v_p(B_nB_{n+1})$ is decreasing and divergent to $-\infty$.
\end{proof}
\end{Corollary}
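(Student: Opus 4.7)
The plan is to invoke the criterion from Remark \ref{rema2}: the continued fraction $[b_0, b_1, \ldots]$ converges in $\mathbb{Q}_p$ if and only if $v_p(B_n B_{n+1}) \to -\infty$. Under the hypotheses of Theorem \ref{ConvBrN}, the task therefore reduces to establishing this divergence from the block structure of the valuations $v_p(B_n)$ provided by the theorem.

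First I would unpack the pattern given by Theorem \ref{ConvBrN}. Within each block of indices $\{rn+1, rn+2, \ldots, rn+r\}$ the valuations $v_p(B_{rn+1}) = v_p(B_{rn+2}) = \cdots = v_p(B_{rn+r})$ all coincide, and then $v_p(B_{rn+r+1})$ drops strictly below this common value. Denoting the common block value by $V_n$, the theorem gives $V_{n+1} < V_n$ for all $n \in \mathbb{N}$. In particular, $V_n$ is a strictly decreasing sequence of integers, hence $V_n \to -\infty$.

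Next I would translate this into the behavior of the products $v_p(B_i B_{i+1})$. For an index $i \in \{rn+1, \ldots, rn+r-1\}$, both $B_i$ and $B_{i+1}$ lie in the same block, so $v_p(B_i B_{i+1}) = 2 V_n$ is constant across the block. For the boundary index $i = rn+r$, the pair straddles two consecutive blocks, giving $v_p(B_i B_{i+1}) = V_n + V_{n+1} < 2V_n$. Thus $\{v_p(B_n B_{n+1})\}$ is non-increasing and strictly decreases at each boundary, i.e.\ at least once every $r$ consecutive steps.

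The conclusion is then immediate: since $V_n \to -\infty$, also $2V_n \to -\infty$ and $v_p(B_n B_{n+1}) \to -\infty$, which by Remark \ref{rema2} yields convergence in $\mathbb{Q}_p$. The only potential obstacle is keeping the index bookkeeping straight at the boundary products $B_{rn+r} B_{rn+r+1}$ that bridge two blocks, but this is a mild accounting issue rather than a substantial difficulty — all the real work has already been carried out in Theorem \ref{ConvBrN}.
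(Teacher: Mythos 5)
Your proof is correct and follows essentially the same route as the paper: reduce to the criterion of Remark \ref{rema2} and read off the divergence of $v_p(B_nB_{n+1})$ from the block structure of valuations given by Theorem \ref{ConvBrN}. Your explicit remark that the common block value is a strictly decreasing sequence of integers (hence tends to $-\infty$) just makes precise the paper's brief ``decreasing and divergent'' conclusion.
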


By Corollary \ref{nsteps}, we obtain the convergence of a $p$--adic continued fractions algorithm generating the partial quotients as
\begin{equation}
\begin{cases}
v_p(b_{rn+1})<0\\
v_p(b_{rn+2})=0\\
v_p(b_{rn+3})=0\\
\ldots\\
v_p(b_{rn+r})=0.
\end{cases}
\end{equation}
With a construction similar to the one made in Example \ref{controex}, it can be proved that the conditions of Theorem \ref{ConvBrN} are necessary for the $p$--adic convergence.

\section{Conclusions}
In this paper we have analyzed the convergence of $p$--adic continued fractions in order to give a better understanding for the design of an optimal algorithm, that at the present time does not exist. In Theorem \ref{teoconve}, we have characterized the strict decrease of the valuations $v_p(B_n B_{n+1})$, used by Browkin in \cite{BI} and \cite{BII}. This characterization guarantees the $p$--adic convergence of all the algorithms generating partial quotients such that $v_p(b_n)+v_p(b_{n+1})<0$ for all $n\in\mathbb{N}$. Outside from this hypothesis, we have also obtained some effective conditions  for the convergence of a $p$--adic continued fractions with only one negative partial quotient each $r$ steps. In particular, Browkin's continued fractions in \cite{BI} and \cite{BII} are respectively the cases when $r=1$ and $r=2$. For the case $r=3$ we have proposed some actual algorithms, proving that one of them terminates in a finite number of steps when processing a rational number.

\end{document}